\documentclass[11pt]{amsart}

\usepackage{amsmath,amssymb,amsthm}
\usepackage{geometry}
\usepackage{xcolor}
\usepackage{hyperref}
\usepackage[numbers,sort&compress]{natbib}

\newtheorem{theorem}{Theorem}[section]
\newtheorem{lemma}[theorem]{Lemma}

\theoremstyle{definition}
\newtheorem{definition}[theorem]{Definition}
\newtheorem{example}[theorem]{Example}
\newtheorem{conjecture}[theorem]{Conjecture}
\theoremstyle{remark}

\numberwithin{equation}{section}

\newenvironment{ack}{\section*{Acknowledgments}}{}

\title{On the mathematics table problem}

\author{Xiao-Song Yang}
\thanks{Corresponding author: XIAO-SONG YANG}
\address{School of Mathematics and Statistics,
Huazhong University of Science and Technology,
1037 Luoyu Road, 430074 Wuhan, P.R. China}
\address{Hubei Key Laboratory of Engineering Modeling
and Scientific Computing, Huazhong University of Science
and Technology, Wuhan 430074, P.R. China}
\email{yangxs@hust.edu.cn}

\author{Xuan Zhou}
\address{School of Mathematics and Statistics,
Huazhong University of Science and Technology,
1037 Luoyu Road, 430074 Wuhan, P.R. China}
\email{xuanzhou1037@hust.edu.cn}

\subjclass[2020]{Primary 55M20, 57R35; Secondary 54C30}

\keywords{table problem, Fenn graph, regular level sets, cyclic quadrilaterals}

\begin{document}

\begin{abstract}
In this paper we study the mathematical table problem from a geometric-topological 
point of view. We prove a zero-existence theorem on a cylinder, which gives a new
proof of Fenn's square-table theorem under Fenn’s boundary conditions, and 
establish a variant under different boundary conditions. We also prove that every square 
table admits a horizontal placement on saddle surfaces. Finally, we show that 
almost every level set of a smooth Fenn graph contains a rectangle similar to any 
prescribed rectangle and an orientation-preserving similar copy of every prescribed 
cyclic quadrilateral.
\end{abstract}

\maketitle

\section{Introduction}\label{sec:intro}

Let $V=[v_{1},v_{2},v_{3},v_{4}]\subset\mathbb{R}^{2}$ be the set of
vertices of a square, with $\overline{v_{1}v_{3}}$ and
$\overline{v_{2}v_{4}}$ being the diagonals. Let $G(f)$ be the
graph defined by a continuous function $f:\mathbb{R}^{2}\to\mathbb{R}$,
that is $G(f)=\{(x,f(x))\in\mathbb{R}^{3}: x\in\mathbb{R}^{2}\}$.
The well-known table problem can be mathematically formulated as
follows.

Given a planar configuration
$V=[v_{1},v_{2},v_{3},v_{4}]\subset\mathbb{R}^{2}$, one may ask
whether there exists a planar rigid motion
$T:\mathbb{R}^{2}\to\mathbb{R}^{2}$ such that
\[
  f(Tv_{1})=f(Tv_{2})=f(Tv_{3})=f(Tv_{4}).
\]
Equivalently, the four corresponding points
\[
  (Tv_i,f(Tv_i))\in G(f),\qquad i=1,2,3,4,
\]
lie in a horizontal plane.

A related question that remains open is whether such a fixed-size
horizontal placement always exists when $V$ is the set of vertices
of a rectangle.

Inspired by Dyson's theorem on real-valued continuous maps on the
$2$-dimensional unit sphere $S^{2}$, R.~Fenn proved the following
theorem, which has an intuitive explanation for horizontally
balancing a square table on an uneven floor or hills.

\begin{theorem}[Fenn's theorem]\label{thm:fenn}
  Let $D\subset\mathbb{R}^{2}$ be a convex compact set. Let $f:\mathbb{R}^{2}\to\mathbb{R}$ be a continuous map satisfying
the following conditions:
  \begin{enumerate}
    \item[(i)] $f(x)=0$ if $x\in\mathbb{R}^{2}-D$;
    \item[(ii)] $f(x)\ge 0$ if $x\in D$.
  \end{enumerate}
  Then for any $a>0$, there is a square in the plane
  $\mathbb{R}^{2}$ centered at a point $p\in D$, with side length
  $a$, such that $f$ takes the same value on the four vertices of
  this square.
\end{theorem}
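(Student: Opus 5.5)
A square of side $a$ is parametrized by its center $p\in\mathbb{R}^2$ and an angle $\theta\in[0,\pi/2]$. We set $r=a/\sqrt2$ and take vertices $v_k(p,\theta)=p+r(\cos(\theta+k\pi/2),\sin(\theta+k\pi/2))$, $k=1,2,3,4$. Define
\[
F(p,\theta)=\bigl(f(v_1)-f(v_3),\; f(v_2)-f(v_4)\bigr)\in\mathbb{R}^2 ,
\]
where each $f(v_k)$ is evaluated at $(p,\theta)$. Rotating by $\pi/2$ relabels the vertices $v_k\mapsto v_{k+1}$, which gives $F(p,\theta+\pi/2)=\sigma(F(p,\theta))$ with $\sigma(x,y)=(y,-x)$.

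The proof reduces the theorem to two claims. First, for some $p\in D$ and some $\theta$ we have $F(p,\theta)=0$; then the diagonal endpoints agree pairwise, $f(v_1)=f(v_3)=:\alpha$ and $f(v_2)=f(v_4)=:\beta$. Second, on the solution set of $F=0$ the function $g=\alpha-\beta$ must vanish somewhere. That is the statement of the theorem.

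For the topological step we follow the idea the abstract announces, a zero-existence theorem on a cylinder. Assuming $f$ is smooth (the general case follows by uniform approximation and compactness of $D$), by Sard's theorem $0$ is a regular value of $F$ on $\mathbb{R}^2\times S^1$ for generic small perturbations. The zero set $Z=F^{-1}(0)$ is then a $1$-manifold. Outside a large disk containing $D$, all four vertices lie outside $D$, so $F\equiv 0$ there. To avoid this degeneracy we work with $\tilde F=F+\epsilon\,\Phi$, with $\Phi$ a fixed map winding once around $0$ on each $\{p\}\times S^1$ via the relabeling symmetry. Alternatively, we restrict to the disk $B$ of centers $p$ for which some vertex meets $D$.

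The key argument is this. Since $f\ge 0$ and $f$ vanishes off $D$, on $\partial B$ exactly one vertex touches $\partial D$ from outside while the others lie outside $D$; there $F$ is explicit. We then compute the degree, or intersection number, of the zero curve with the boundary cylinder $\partial B\times S^1$. This shows that some component of $Z$ projects into $\mathrm{int}\,B$ and is a closed curve $\gamma$ along which $\theta$ advances by $\pi/2$ modulo the symmetry. Along $\gamma$ the symmetry swaps the roles of $(v_1,v_3)$ and $(v_2,v_4)$, so $g$ changes sign between the endpoints of one period. By the intermediate value theorem, $g=0$ at some point of $\gamma$, which gives a square with all four values equal. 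The center lies in $D$ because $D$ is convex and $f\ge 0$: a square centered outside $D$ with all four values equal and positive would need four vertices in $D$, which forces the center into $D$ by convexity. If the common value is $0$, we choose a square with positive values if possible, or otherwise note that any square centered in $D$ with vertices outside $\operatorname{supp} f$ works; such a square exists when the side length is large relative to $\operatorname{supp} f$, and the small case is handled by the sign-change argument.

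The main obstacle is the degree computation on the boundary cylinder. We must show that the curve of solutions of $f(v_1)=f(v_3)$, $f(v_2)=f(v_4)$ cannot simply escape through the boundary. This is where conditions (i) and (ii) enter. The plan is to deform $f$ to a model bump function $f_0$, for example a radially symmetric cone over $D$ shrunk to a point, for which the zero set can be computed by hand. The model has a unique nondegenerate square centered at the apex, contributing $\pm1$ to the count. Homotopy invariance of the count, which holds because the boundary behaviour is controlled uniformly by (i) and (ii), then transfers the conclusion to $f$.
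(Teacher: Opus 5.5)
Your reduction to a common zero of $F$ and $g$ under the quarter-turn symmetry is sound. There is, however, a genuine gap exactly at the step you call the main obstacle: with your $F$ and your choice of boundary there is no degree to compute.

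If $p\in\partial B$, every vertex lies in $\overline{\mathbb{R}^{2}\setminus D}$. This holds whether $B$ is the $\theta$-dependent set of centers for which some vertex meets $D$, or a fixed disk such as $\{p:\operatorname{dist}(p,D)\le r\}$. On $\overline{\mathbb{R}^{2}\setminus D}$ we have $f=0$ by (i) and continuity. In particular the vertex that ``touches $\partial D$'' also has value $0$. So the ``explicit'' value of $F$ on $\partial B\times S^{1}$ is $0$, just as on the whole open set of squares missing $D$.

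This degeneracy defeats your other ingredients as well:
\begin{enumerate}
\item[(a)] Sard cannot make $0$ a regular value by perturbing $f$ inside Fenn's class, since $F\equiv0$ on an open set for every such $f$.
\item[(b)] Homotopy invariance along a deformation to a cone fails. Since $F_t$ vanishes identically on $\partial B\times S^{1}$ and beyond, nothing prevents zero curves from merging into that region, so there is no invariant count.
\item[(c)] The perturbation $F+\epsilon\Phi$ controls the wrong direction. What forces a component of the zero set to run from $\theta$ to $\theta+\frac{\pi}{2}$ is an odd degree of $F(\cdot,\theta)$ on a circle in the $p$-plane. A $\Phi$ that merely winds in $\theta$, such as the $p$-independent, $\sigma$-compatible $\Phi(\theta)=(\cos\theta,-\sin\theta)$, is a nonzero constant on each slice far out. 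Each slice then carries an even number of zeros, and the parity argument yields nothing.
\item[(d)] If you instead build a $p$-degree into $\Phi$, you must still show that the limits as $\epsilon\to0$ are not squares with common value $0$ centered off $D$. Your last paragraph settles that case only by appealing circularly to ``the sign-change argument''.
\end{enumerate}

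The missing idea is to keep the centers in $D$ and use $\partial D\times[0,\frac{\pi}{2}]$ as the boundary. There Fenn's hypotheses give a one-sided sign instead of identical vanishing, and your $F$ already has the right zero set. Written in the fixed frame it is $U(p,\theta)=\sum_{k}f(v_k)(v_k-p)=r(F_1e_1+F_2e_2)$ with $e_k=(v_k-p)/r$, and $U$ is invariant under the quarter turn. For $p\in\partial D$, each vertex with $(v_k-p)\cdot\nu_D(p)>0$ lies outside $D$ by convexity, so $U\cdot\nu_D\le0$.

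If $U$ vanishes at such a boundary point, then each diagonal pair has a member with value $0$, so all four values are $0$ and the square is centered in $D$. Otherwise $-U$ is homotopic to $\nu_D$ through nonvanishing maps, so it has degree $1$ on $\partial D$. The endpoint-parity argument of Theorem~\ref{thm:main} then applies with $G=f(v_1)-f(v_2)+f(v_3)-f(v_4)$. This $G$ is anti-invariant under the quarter turn and equals $2g$ on $U^{-1}(0)$, and the center lies in $D$ automatically, so no separate treatment of the value-zero case is needed.

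This is precisely the paper's scheme. There $\psi$, the sum over the four quarter turns of the horizontal projections of the unit normals to the planes through three lifted vertices, plays the role of $U$. The coplanarity defect $\phi=f(a_1)-f(a_2)+f(a_3)-f(a_4)$ plays the role of your $g$.
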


Fenn's theorem depends on the specific boundary conditions on the
continuous map.

As in \cite{Fenn70}, let $D\subset\mathbb{R}^{2}$ be a compact
convex domain with the origin in its interior. Denote by $a_{1},a_{2},a_{3}$ and $a_{4}$ the vertices
of a square in the plane with center $p$ and fixed side length $d$.
Let $\Pi(p,\alpha)$ be the plane determined by the points
$(a_{1},f(a_{1}))$, $(a_{2},f(a_{2}))$ and $(a_{3},f(a_{3}))$,
with $\alpha$ being the angle between the $x_{1}$-axis and the
vector $\overline{a_{1}a_{2}}$.  Denote by $L(a_{4})$ the line
parallel to the $x_{3}$-axis passing through $a_{4}$. Let $q(p,\alpha)$ denote the $x_{3}$-coordinate of the point 
$\Pi(p,\alpha)\cap L(a_{4})$.  Then the function
$\phi:D\times S^{1}\to\mathbb{R}$ defined by
$\phi(p,\alpha)=q(p,\alpha)-f(a_{4})$ is clearly continuous because the
fourth vertex $a_{4}$ depends continuously on the center point $p$
and the angle $\alpha$.

The function $\phi$ has the following properties.
\begin{itemize}
  \item[(i)] If $\phi(x,\alpha)=0$, then
    $\phi(x,\alpha+\frac{\pi}{2})=0$.
  \item[(ii)] If $\phi(x,\alpha)\neq0$, then
    $\phi(x,\alpha)\cdot\phi(x,\alpha+\frac{\pi}{2})<0$.
\end{itemize}

For $(x,\alpha)\in D\times S^{1}$, let $n(x,\alpha)$ be the unit
vector normal to the plane $\Pi(x,\alpha)$ with the
$x_{3}$-component being positive.  Let $v(x,\alpha)=P(n(x,\alpha))$, where $P:\mathbb{R}^{3}\to\mathbb{R}^{2}$ is the horizontal projection $P(x_{1},x_{2},x_{3})=(x_{1},x_{2})$.  We obtain
a map $\Phi:D\times S^{1}\to\mathbb{R}^{3}$ as follows
\begin{equation}\label{eq:Phi}
  \Phi(x,\alpha)=
  \begin{pmatrix}
    \phi(x,\alpha)\\[2pt]
    \psi(x,\alpha)
  \end{pmatrix},
\end{equation}
with $\psi(x,\alpha)=v(x,\alpha)+v(x,\alpha+\frac{\pi}{2})
  +v(x,\alpha+\pi)+v(x,\alpha+\frac{3\pi}{2})$.

Note that $\psi(x,\alpha)=\psi(x,\alpha+\frac{\pi}{2})$.

To prove Fenn's theorem, it is clearly enough to prove the
existence of zero points of the map $\Phi$.  For points
$(x,\alpha)\in\phi^{-1}(0)$, we have $\psi(x,\alpha)=4v(x,\alpha)$.

Unlike \cite{Fenn70}, where the arguments are based on homological
techniques in algebraic topology, we present an elementary proof in
terms of differential topology for this elegant theorem. In addition, 
we discuss related problems for rectangular and cyclic
configurations, especially on regular levels of smooth Fenn graphs.

This paper is organized as follows. In Section~\ref{sec:basic} we establish the
zero-existence theorem. In Section~\ref{sec:table} we apply it to give a new proof
of Fenn's theorem and give a new theorem under different boundary conditions. 
In Section~\ref{sec:surfaces} we discuss square tables on saddle surfaces.
Finally, Section~\ref{sec:cyclic} establishes two theorems on rectangular and cyclic
configurations on regular level sets of smooth Fenn graphs and propese a conjecture 
for cyclic quadrilateral.

\section{A Basic Theorem}\label{sec:basic}

In this section we mainly establish a general result for later
arguments. Let $D\subset\mathbb{R}^{2}$ be a compact domain 
 with $C^{1}$ boundary. We first record the following elementary fact.

\begin{lemma}\label{lem:basic}
  Let $F:D\times[a,b]\to\mathbb{R}^{2}$ be a smooth map satisfying:
  \begin{enumerate}
    \item[(1)]
      $F^{-1}(0)\cap(\partial D\times[a,b])=\varnothing$;
    \item[(2)]
      $0$ is a regular value of $F$, and also a regular value of
      $F_{a}=F|_{D\times\{a\}}$ and $F_{b}=F|_{D\times\{b\}}$;
    \item[(3)]
      $\deg\bigl(\frac{F_{a}}{\|F_{a}\|}:\partial D\to S^{1}\bigr)$
      is odd.
  \end{enumerate}
  Then $F^{-1}(0)$ contains a connected component meeting both
  $D\times\{a\}$ and $D\times\{b\}$.
\end{lemma}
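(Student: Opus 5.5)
The approach is the standard structure theorem for compact one-dimensional manifolds with boundary, combined with a parity count. Write $M=D\times[a,b]$, a compact $3$-manifold with corners. By (1), the zero set $Z=F^{-1}(0)$ avoids $\partial D\times[a,b]$, and hence avoids the corners. So $Z$ lies in the interior of $D$ times $[a,b]$, and there $M$ is a genuine manifold with boundary $\operatorname{int}D\times\{a,b\}$. By (2), $0$ is a regular value of $F$ and also of $F|_{\partial M}$ near $Z$, namely of $F_a$ and $F_b$. The preimage theorem for manifolds with boundary then shows that $Z$ is a compact smooth $1$-dimensional submanifold of $M$ with
\[
\partial Z = Z\cap\bigl(D\times\{a,b\}\bigr)=\bigl(F_a^{-1}(0)\times\{a\}\bigr)\cup\bigl(F_b^{-1}(0)\times\{b\}\bigr).
\]
Compactness follows because $Z$ is closed in the compact set $M$. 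By the classification of compact $1$-manifolds, each component of $Z$ is either a circle, which has no boundary, or an arc with exactly two endpoints, both lying in $D\times\{a,b\}$.

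Next, the degree hypothesis forces an odd number of zeros at the bottom. The set $F_a^{-1}(0)$ is finite, since it is a compact $0$-manifold by regularity. Its cardinality has the same parity as the sum of local indices $\pm1$ (each zero is nondegenerate), and that sum equals $\deg(F_a/\|F_a\|:\partial D\to S^1)$. To justify this, remove small disks around the zeros and use the fact that the degree of $F_a/\|F_a\|$ on the boundary of $D$ minus these disks is zero, because the map extends over the region. This works for a compact domain with $C^1$ boundary, possibly with several boundary circles, provided $\partial D$ is oriented as the boundary of $D$. Since the degree is odd by (3), $|F_a^{-1}(0)|$ is odd.

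Finally, apply a pairing argument. Suppose no component of $Z$ meets both levels. Then every arc with an endpoint in $D\times\{a\}$ has both endpoints there, so the arcs pair off the points of $F_a^{-1}(0)\times\{a\}$. This forces $|F_a^{-1}(0)|$ to be even, a contradiction. Hence some arc starts at a zero of $F_a$ and ends at a zero of $F_b$, which is the required connected component. I expect the only delicate step to be the index-sum identity. The approach is Hopf's: the degree of $F_a/\|F_a\|$ on $\partial D$ equals the sum of the degrees on small circles around the zeros, and each of those is $\operatorname{sign}\det DF_a=\pm1$ by nondegeneracy. One must handle the case where $D$ has several boundary components, which is done by summing the degrees over all of them with the induced orientation. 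One should also check that the corner issue is harmless, which is exactly what hypothesis (1) provides.
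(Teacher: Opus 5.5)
Your proposal is correct and follows the same route as the paper. An index-sum (Poincar\'e--Hopf) argument shows that $F_a$ has an odd number of zeros; then, since $F^{-1}(0)$ is a compact $1$-manifold whose endpoints lie only in $D\times\{a\}\cup D\times\{b\}$ by (1), pairing endpoints along arc components gives the parity contradiction. You supply details the paper leaves implicit, namely the preimage theorem with boundary, why the corners cause no trouble, and the index sum when $\partial D$ has several components.
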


\begin{proof}
  By the Poincar\'e--Hopf index theorem, condition~(3) implies that
  $F_a$ has an odd number of zero points in $D$.

  Since $0$ is a regular value of $F$, the zero set $F^{-1}(0)$ is a
  compact one-dimensional manifold, whose components are circles or
  intervals. Moreover, by condition (1), the zero set
	does not meet $\partial D\times[a,b]$. Therefore the endpoints of these
	segments, if any, can only lie in $D\times\{a\}$ or in $D\times\{b\}$.
	
	Suppose that no segment connects $D\times\{a\}$ and $D\times\{b\}$.
	Then every segment with one endpoint in $D\times\{a\}$ must have its
	other endpoint also in $D\times\{a\}$. It follows that the number of
	zero points of $F$ on $D\times\{a\}$ must be even. This contradicts the
	oddness proved above. Therefore $F^{-1}(0)$ has a component connecting
	$D\times\{a\}$ and $D\times\{b\}$.
\end{proof}

We now prove the main result of this section, which will be applied to
the table problem in the next section.

\begin{theorem}\label{thm:main}
Let $D\subset\mathbb{R}^{2}$ be a compact convex domain with $C^{1}$
boundary, and let
\[
  F:D\times[a,b]\to\mathbb{R}^{2},
  \qquad
  G:D\times[a,b]\to\mathbb{R}
\]
be continuous maps satisfying:
\begin{enumerate}
  \item[(1)] $F(x,s)\cdot\nu_D(x)>0$ for all
    $x\in\partial D$ and $s\in[a,b]$, where $\nu_D(x)$ denotes
    the outward unit normal vector to $\partial D$;
  \item[(2)] $F(x,a)=F(x,b)$ for all $x\in D$;
  \item[(3)] if $G(x,a)=0$ for some $x\in D$, then $G(x,b)=0$;
  \item[(4)] if $G(x,a)\neq0$, then
    \[
      G(x,a)G(x,b)<0.
    \]
\end{enumerate}
Then there exists $(\bar{x},\bar{s})\in D\times[a,b]$ such that
\[
  F(\bar{x},\bar{s})=0
  \qquad\text{and}\qquad
  G(\bar{x},\bar{s})=0.
\]
\end{theorem}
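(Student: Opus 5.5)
The plan is to reduce Theorem~\ref{thm:main} to Lemma~\ref{lem:basic}. We apply the lemma to the map $H=(F,G)$, but on the three-dimensional space $D\times[a,b]$ the map $H$ takes values in $\mathbb{R}^{3}$. So we use the lemma on $F$ alone, obtaining a curve of zeros of $F$ running from the bottom to the top. Then we apply the intermediate value theorem to $G$ along that curve. The proof first treats smooth data with $0$ a regular value and then passes to the limit by approximation and compactness.

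\emph{Step 1 (smooth generic case).} Suppose first that $F$ is smooth and that $0$ is a regular value of $F$, $F_a$ and $F_b$; by (2), $F_a=F_b$.
\begin{itemize}
\item Condition (1) gives $F\neq0$ on $\partial D\times[a,b]$, which is hypothesis (1) of Lemma~\ref{lem:basic}.
\item Since $F_a\cdot\nu_D>0$ on $\partial D$, the map $F_a/\|F_a\|$ is homotopic on $\partial D$ to the Gauss map $\nu_D$, via the straight-line homotopy, which never passes through $0$. Because $D$ is convex with $C^{1}$ boundary, $\partial D$ is a circle and the Gauss map has degree $1$, which is odd.
\end{itemize}
Lemma~\ref{lem:basic} then gives a connected component $\gamma\subset F^{-1}(0)$, an embedded arc, joining some $(x_0,a)$ to some $(x_1,b)$. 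Restrict $G$ to $\gamma$. If $G(x_0,a)=0$ we are done. Otherwise $G(x_0,a)$ and $G(x_0,b)$ have opposite signs by (4).

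The obstacle appears here: the arc ends at $x_1$, not $x_0$. Condition (2) gives $F(x_1,a)=F(x_1,b)=0$, so $x_1$ is also a zero of $F_a$. But $G(x_1,b)$ need not be comparable with $G(x_0,a)$.

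\emph{Step 2 (the parity argument).} The fix is to use all arcs at once. Let $Z=F_a^{-1}(0)=F_b^{-1}(0)$, a finite set of odd cardinality. Each arc of $F^{-1}(0)$ with endpoints on the boundary pairs up points of $Z\times\{a\}\cup Z\times\{b\}$.
\begin{itemize}
\item Assume $G\neq0$ on $F^{-1}(0)$. Then $G$ has constant sign on each arc.
\item Let $n_+$ be the number of points of $Z$ with $G(\cdot,a)>0$. By (4), the number of points of $Z\times\{b\}$ with $G>0$ is $|Z|-n_+$.
\item Each arc joins two endpoints of the same sign. Hence the total number of positive endpoints, $n_++(|Z|-n_+)=|Z|$, must be even.
\end{itemize}
This contradicts $|Z|$ odd, so $G$ vanishes somewhere on $F^{-1}(0)$. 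If $G(z,a)=0$ at some $z\in Z$, this is already a common zero. Condition (3) is what makes this bookkeeping consistent: it guarantees that zeros of $G$ at the bottom correspond to zeros at the top.

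\emph{Step 3 (approximation).} For continuous $F,G$, approximate $F$ uniformly by smooth $F_n$ that still satisfy (1) and (2). One way is to smooth $F(\cdot,a)$ and $F$ separately and glue them with an $s$-cutoff, or to mollify $F$ viewed as periodic in $s$, which (2) permits. Then perturb by small constants so that $0$ is a regular value, by Sard's theorem; adding a constant vector preserves both (1), for small constants, and (2).

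Keep $G$ continuous. The parity argument in Step 2 only needs $G$ continuous on the arcs, so applying it to $F_n$ with the original $G$ produces points $(x_n,s_n)$ with $F_n(x_n,s_n)=0$ and $G(x_n,s_n)=0$. By compactness, a subsequence converges to $(\bar x,\bar s)$. Uniform convergence $F_n\to F$ gives $F(\bar x,\bar s)=0$, and continuity of $G$ gives $G(\bar x,\bar s)=0$. I expect the main care to lie in Step 2, specifically in correctly handling points of $Z$ where $G$ vanishes and arcs whose two ends lie on the same face.
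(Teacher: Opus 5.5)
Your proposal is correct and follows essentially the paper's proof. Both arguments get an odd number of zeros of $F_a=F_b$ from the degree-one boundary condition, count the $G$-positive endpoints of the arcs of $F^{-1}(0)$ to get $|Z|$ (odd, contradicting pairing), and then approximate $F$ by smooth maps preserving (1) and (2); your sequential-compactness limit replaces the paper's $\delta$-contradiction. Your detour through Lemma~\ref{lem:basic} in Step~1 is unnecessary. Condition~(3) plays no role in your argument or the paper's, since a zero of $G$ at some $(z,a)$ with $z\in Z$ is already a common zero.
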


\begin{proof}
We first consider the case where $F$ is smooth and $0$ is a regular
value of $F$, $F_a=F|_{D\times\{a\}}$, and
$F_b=F|_{D\times\{b\}}$.

By condition~(1), the boundary map $F_a/\|F_a\|$ has degree $1$.
Hence, by the Poincar\'e--Hopf index theorem, $F_a$ has an odd number
of zero points in $D$.  Denote them by
\[
  x_1,\ldots,x_{2k+1}.
\]
By condition~(2), these are also precisely the zero points of $F_b$.

If $G(x_i,a)=0$ for some $i$, then $(x_i,a)$ is already a common
zero of $F$ and $G$.  We may therefore assume that
\[
  G(x_i,a)\neq0,\qquad i=1,\ldots,2k+1.
\]
Condition~(4) then shows that $G(x_i,a)$ and $G(x_i,b)$ have opposite
signs for every $i$.

Since $0$ is a regular value of $F$, the set $F^{-1}(0)$ is a compact
one-dimensional manifold.  By condition~(1), it does not meet
$\partial D\times[a,b]$, and hence its interval components have
endpoints among
\[
  (x_i,a),\ (x_i,b),\qquad i=1,\ldots,2k+1.
\]
For each $i$, exactly one of $(x_i,a)$ and $(x_i,b)$ is a positive
endpoint of $G$.  Thus the total number of positive endpoints is
$2k+1$, which is odd.

The endpoint-pairing idea underlying this argument already appeared
in~\cite{Yang26}. In that earlier proof, the zeros on the two boundary
faces were partitioned according to whether $G$ was positive or
negative, and the argument followed how the interval components of
$F^{-1}(0)$ pair the corresponding positive and negative endpoints.
The following parity argument gives a shorter and more symmetric
formulation of the same idea.

If the two endpoints of every interval component of $F^{-1}(0)$ had
the same sign under $G$, then the positive endpoints would occur in
pairs, and their total number would be even.  This is a contradiction.
Therefore some interval component $l\subset F^{-1}(0)$ has endpoints
at which $G$ has opposite signs.  By continuity, there exists
$(\bar{x},\bar{s})\in l$ such that
\[
  G(\bar{x},\bar{s})=0.
\]
Since $l\subset F^{-1}(0)$, we also have
\[
  F(\bar{x},\bar{s})=0.
\]

We now consider the continuous case.  Suppose, to the contrary, that
$F$ and $G$ have no common zero.  By compactness, there exists
$\delta>0$ such that
\[
  \|F(x,s)\|+|G(x,s)|\geq\delta
\]
for all $(x,s)\in D\times[a,b]$.

By a standard endpoint-preserving smooth approximation and
transversality argument, one may choose a smooth map
\[
  \overline{F}:D\times[a,b]\to\mathbb{R}^{2}
\]
such that
\[
  \overline{F}(x,a)=\overline{F}(x,b)
  \qquad\text{for all }x\in D,
\]
the value $0$ is regular for $\overline{F}$,
$\overline{F}_a$, and $\overline{F}_b$, and
\[
  \|F(x,s)-\overline{F}(x,s)\|<\frac{\delta}{4}
  \qquad\text{for all }(x,s)\in D\times[a,b].
\]
Taking the approximation sufficiently close, condition~(1) remains
valid for $\overline{F}$.

Applying the smooth case to $\overline{F}$ and $G$, we obtain
$(\bar{x},\bar{s})\in D\times[a,b]$ such that
\[
  \overline{F}(\bar{x},\bar{s})=0
  \qquad\text{and}\qquad
  G(\bar{x},\bar{s})=0.
\]
Consequently,
\[
  \|F(\bar{x},\bar{s})\|+|G(\bar{x},\bar{s})|
  <\frac{\delta}{4},
\]
contradicting the definition of $\delta$.
\end{proof}

\section{Table Problem}\label{sec:table}

In this section we first give a new proof of Fenn's theorem on the
square table problem. We then prove a variant under different
boundary conditions.

\begin{proof}[Proof of Theorem~\ref{thm:fenn}]
  Unlike the arguments in \cite{Fenn70}, we instead consider the
  space $D\times\bigl[0,\frac{\pi}{2}\bigr]$ and the maps
  \[
    G=\phi,\qquad F=\psi,
  \]
  where $\phi$ and $\psi$ are as defined in~\eqref{eq:Phi}, with the side 
  length fixed as $d$.  Note that $\psi(x,\alpha)=\psi(x,\alpha+\frac{\pi}{2})$ 
  by construction, so condition~(2) of Theorem~\ref{thm:main} holds.

  It is enough to prove that there is a point
  $(x,s)\in D\times\bigl[0,\frac{\pi}{2}\bigr]$ such that
  \[
    \phi(x,s)=0\qquad\text{and}\qquad\psi(x,s)=0.
  \]
  Indeed, on $\phi^{-1}(0)$ we have
  $\psi(x,s)=4v(x,s)$, and hence $\psi(x,s)=0$ if and only if
  $v(x,s)=0$.

  Suppose, to the contrary, that no such point exists. Let 
  $A=\{(x,s)\in D\times[0,\pi/2]:\phi(x,s)=0\}$. By compactness, 
  there exists $\delta>0$ such that $\|\psi(x,s)\|\geq \delta$ for all 
  $(x,s)\in A$.

  By the convexity of $D$, we have
\[
  \psi(x,s)\cdot\nu_{D}(x)\ge0
  \qquad\text{for every }x\in\partial D,
\]
where $\nu_{D}(x)$ is the outward unit normal to $\partial D$.
Indeed, the corresponding inequality holds for each of the four terms
$v(x,s+j\frac{\pi}{2})$, $j=0,1,2,3$, and hence also for their
sum $\psi(x,s)$.  Now it is easy to see that there exists a
smooth map
\[
  \overline\psi:
  D\times\bigl[0,\frac{\pi}{2}\bigr]\to\mathbb{R}^{2}
\]
such that
\[
  \overline\psi(x,s)\cdot\nu_{D}(x)>0
  \qquad
  \text{on }\partial D\times\bigl[0,\frac{\pi}{2}\bigr],
\]
\[
  \overline\psi(x,0)=\overline\psi\left(x,\frac{\pi}{2}\right),
\]
and
\[
  \|\overline\psi(x,s)-\psi(x,s)\|<\frac{\delta}{2}
  \qquad
  \text{for all }(x,s)\in
  D\times\bigl[0,\frac{\pi}{2}\bigr].
\]
Then by Theorem~\ref{thm:main} applied with $F=\overline\psi$ and
$G=\phi$, there exists a point $(\bar{x},\bar{s})\in A$ such that
$\overline\psi(\bar{x},\bar{s})=0$.  Therefore
\[
  \|\psi(\bar{x},\bar{s})\|<\frac{\delta}{2},
\]
leading to a contradiction.  Hence $A\cap\psi^{-1}(0)\neq\varnothing$,
and the square table is horizontally placed.
\end{proof}

The following new result concerns the boundary conditions.

\begin{theorem}\label{thm:newbc}
  Let $D\subset\mathbb R^2$ be a compact convex domain, and let
  $f:\mathbb R^2\to\mathbb R$ be a continuous map satisfying one of
  the following conditions:
  \begin{enumerate}
  	\item[(i)] $f(x)\geq 0$ for $x\in\mathbb R^2\setminus D$, and there
  	exists $\delta>0$ such that
  	\[
  	f(x)\leq 0
  	\]
  	whenever $\operatorname{dist}(x,\partial D)\leq\delta$;
  	\item[(ii)] $f(x)\leq 0$ for $x\in\mathbb R^2\setminus D$, and there
  	exists $\delta>0$ such that
  	\[
  	f(x)\geq 0
  	\]
  	whenever $\operatorname{dist}(x,\partial D)\leq\delta$.
  \end{enumerate}
  Then, for every $a<\frac{\sqrt2}{2}\delta$, there exists a point $p\in D$ and a square in $\mathbb R^2$ centered at $p$, with side length $a$, such that $f$ takes the same value at the four vertices of the square.
\end{theorem}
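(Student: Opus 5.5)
The plan is to reduce Theorem~\ref{thm:newbc} to the same machinery used in the proof of Theorem~\ref{thm:fenn}: apply Theorem~\ref{thm:main} on $D\times[0,\frac{\pi}{2}]$ with $G=\phi$ and $F=\pm\psi$, where $\phi,\psi$ are built from $f$ as in~\eqref{eq:Phi} with side length $a$.

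\emph{Reduction to case (i).} The conclusion is invariant under $f\mapsto -f$, since equal values at the four vertices is the same condition for $f$ and $-f$. Condition~(ii) for $f$ is condition~(i) for $-f$, so it suffices to treat case~(i).

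\emph{Structure of $f$ near the boundary.} Let $x\in\partial D$ be the center of a square of side $a$. Every vertex lies at distance $a/\sqrt2<\delta/2$ from $x$, so every vertex is within $\delta$ of $\partial D$.
\begin{itemize}
\item Vertices in $\mathbb R^2\setminus D$ satisfy $f\ge 0$ and $f\le 0$, hence $f=0$.
\item Vertices in $D$ satisfy $f\le0$.
\end{itemize}
So in a neighbourhood of $\partial D$, $f$ looks like the negative of a Fenn function: it is zero outside and nonpositive inside. This is the only place the hypothesis $a<\frac{\sqrt2}{2}\delta$ enters.

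\emph{Boundary sign of $\psi$.} Properties~(i),(ii) of $\phi$ hold for any continuous $f$, so conditions~(3),(4) of Theorem~\ref{thm:main} hold for $G=\phi$. Condition~(2) holds for $\psi$ by its $\frac{\pi}{2}$-periodicity. The main claim is
\[
\psi(x,s)\cdot\nu_D(x)\le 0\qquad\text{for }x\in\partial D,
\]
which is the reverse of the inequality in the proof of Theorem~\ref{thm:fenn}. It will follow by checking each term $v(x,s+j\frac{\pi}{2})$.

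To check a term, write the plane $\Pi$ as the graph of an affine function $h$. Then $v=-\nabla h/\sqrt{1+|\nabla h|^2}$, so the claim becomes $\nabla h\cdot\nu_D(x)\ge0$: the plane through the three lifted vertices must not descend in the outward direction. Consider the supporting line $\ell$ of $D$ at $x$.
\begin{itemize}
\item Vertices strictly on the outer side of $\ell$ lie outside $D$, so $f=0$ there.
\item Vertices on the inner side have $f\le 0$, with $f=0$ if they are outside $D$.
\end{itemize}
I would compare $h$ at the vertices with their signed distances to $\ell$. Since $a_1,a_3$ are diagonally opposite, $h(x)=\tfrac12(f(a_1)+f(a_3))$. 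One then expresses $\nabla h\cdot\nu$ through the differences $f(a_2)-f(a_1)$ and $f(a_2)-f(a_3)$ along the two sides. The sign pattern above should force the sign, mirroring what Fenn's argument needs for $f\ge0$ inside.

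\emph{Expected obstacle.} This sign verification is where I expect the real difficulty. The square can sit with $\ell$ cutting it in several combinatorially different ways, and the three vertices defining $\Pi$ need not include the outside ones. I would first do an explicit case analysis on how many of $a_1,a_2,a_3$ lie on each side of $\ell$. If some configuration fails termwise, I would instead try to prove only the summed inequality for $\psi$.

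\emph{Perturbation and conclusion.} Once the weak inequality holds, I take $F=-\psi$, so that $F\cdot\nu_D\ge0$. I then repeat verbatim the perturbation step from the proof of Theorem~\ref{thm:fenn}:
\begin{itemize}
\item argue by contradiction, so that $\|\psi\|\ge\delta'>0$ on $A=\phi^{-1}(0)$;
\item perturb $F$ to a smooth $\overline F$ with $\overline F\cdot\nu_D>0$ on the boundary, $\overline F(\cdot,0)=\overline F(\cdot,\frac{\pi}{2})$, and $\|\overline F-F\|<\delta'/2$;
\item apply Theorem~\ref{thm:main} to get a point of $A$ where $\|\psi\|<\delta'/2$, a contradiction.
\end{itemize}
On $\phi^{-1}(0)$ we have $\psi=4v$, so $\psi=0$ forces $v=0$. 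Hence the plane $\Pi$ is horizontal and the four values of $f$ at the vertices coincide, for a square centered at a point $p\in D$.
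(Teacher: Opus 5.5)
Your route is the paper's: reduce to case~(i), show $\psi\cdot\nu_D\le0$ on $\partial D\times[0,\frac{\pi}{2}]$, perturb $F=-\psi$ to get a strict inequality, and apply Theorem~\ref{thm:main} with $G=\phi$. The gap is that the boundary inequality is the only step with real content, and you leave it unproved; the paper also only calls it ``easy''. Moreover, the termwise check you plan first is false, and so is the termwise claim the paper makes in its proof of Theorem~\ref{thm:fenn}. Here is a counterexample. Write $r=a/\sqrt2$ and $a_1=x+ru$, $a_2=x+rw$, $a_3=x-ru$, $a_4=x-rw$, with $u\perp w$ unit vectors. Choose the angle so that $0<\nu\cdot u<-\nu\cdot w$. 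Then $a_1,a_4$ lie strictly outside, so $f(a_1)=f(a_4)=0$. Meanwhile $a_2$ (deep inside) and $a_3$ (barely inside, e.g.\ when $\partial D$ is straight near $x$) may carry $f(a_2)=0$ and $f(a_3)=-t<0$. The plane through the first three lifted vertices then has $\nabla h=\frac{t}{2r}(u+w)$, so $\nabla h\cdot\nu=\frac{t}{2r}(\nu\cdot u+\nu\cdot w)<0$. Thus it descends outward and $v\cdot\nu>0$.

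The summed inequality does hold, and you should go straight to it. For $k=1,\dots,4$ let $h_k$ be the affine function whose graph is the plane through the three lifted vertices other than $a_k$, so that $v_k=-\nabla h_k/\sqrt{1+|\nabla h_k|^2}$. Shifting $s$ by $\frac{\pi}{2}$ only relabels the vertices cyclically, so $\psi(x,s)=v_1+v_2+v_3+v_4$ for any labeling. Relabel so that $\nu\cdot u\ge0$ and $\nu\cdot w\ge0$. Then $f(a_1)=f(a_2)=0$: points beyond the supporting line are handled by your observation, and points on it lie in $\partial D$ or outside $D$, where $f=0$ by continuity. Write $f(a_3)=-2rp$ and $f(a_4)=-2rq$ with $p,q\ge0$. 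In the frame $(u,w)$ a direct computation gives
\[
\nabla h_4=(p,p),\qquad \nabla h_3=(q,q),\qquad \nabla h_2=(p,\,2q-p),\qquad \nabla h_1=(2p-q,\,q).
\]
The only entries that can be negative are the $w$-entry of $\nabla h_2$ and the $u$-entry of $\nabla h_1$. Since $t\mapsto t/\sqrt{c+t^2}$ is increasing,
\[
\frac{2q-p}{\sqrt{1+p^2+(2q-p)^2}}\ge-\frac{p}{\sqrt{1+2p^2}},\qquad
\frac{2p-q}{\sqrt{1+(2p-q)^2+q^2}}\ge-\frac{q}{\sqrt{1+2q^2}}.
\]
These lower bounds are exactly cancelled by the $w$-entry of the $h_4$ term and the $u$-entry of the $h_3$ term. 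Hence both components of $-\psi=\sum_k\nabla h_k/\sqrt{1+|\nabla h_k|^2}$ are $\ge0$ in this frame. Since $\nu$ has nonnegative components there, $\psi\cdot\nu\le0$, and the rest of your argument goes through.

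One further point, shared with the paper: Theorem~\ref{thm:main} requires $\partial D$ to be $C^1$. You can arrange this by replacing $D$ with its closed $\varepsilon$-neighbourhood, for which hypothesis~(i) holds with $\delta-\varepsilon$; the strict bound $a<\frac{\sqrt2}{2}\delta$ leaves room for this. Then let $\varepsilon\to0$ and pass to a limit of centres and angles.
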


\begin{proof}
  It is enough to consider case~(i).  In this case, we consider the
  space $D\times\bigl[0,\frac{\pi}{2}\bigr]$ and the maps
  $\phi:D\times\bigl[0,\frac{\pi}{2}\bigr]\to\mathbb{R}$ and
  $\psi:D\times\bigl[0,\frac{\pi}{2}\bigr]\to\mathbb{R}^{2}$, where
  $\phi$ and $\psi$ are as defined in Section~\ref{sec:intro}, but with side
  length $a$.  Since the domain $D$ is convex, it is easy to prove
  that $-\psi(x,s)\cdot\nu_{D}(x)\ge0$ for each $x\in\partial D$, where
  $\nu_{D}(x)$ is the outward unit normal. As in the new proof of Fenn's 
  theorem, replacing $-\psi$ by a sufficiently close map satisfying 
  the strict boundary inequality, we can apply Theorem~\ref{thm:main} 
  with $F=-\psi$ and $G=\phi$ to complete the proof.
  Case~(ii) is proved in the same way, with the sign reversed.
\end{proof}

\section{Table Problem on Saddle Surfaces}\label{sec:surfaces}

In this section we first present a sufficient condition for a
square table to be placed horizontally on the surface defined by a
general graph $G(f)$.  For a continuous function
$f:\mathbb{R}^{2}\to\mathbb{R}$, in view of Theorem~\ref{thm:main} we have the following fact.

\begin{theorem}\label{thm:surface}
Let $D\subset\mathbb R^{2}$ be a compact domain with $C^{1}$ boundary
homeomorphic to $S^{1}$.  Let $\phi_r$ and $\psi_r$ be the maps defined
in~\eqref{eq:Phi}, where $r$ is the side length of the corresponding
square.  Assume that $\psi_r\neq0$ on
$\partial D\times[0,\pi/2]$, and that
\[
  \deg\left(
    \frac{\psi_r(\cdot,0)}{\|\psi_r(\cdot,0)\|}:
    \partial D\longrightarrow S^{1}
  \right)
\]
is odd.  Then one can place horizontally a square table with side
length $r$ on the graph $G(f)$, with its center at a point of $D$.
\end{theorem}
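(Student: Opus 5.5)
The plan is to rerun the argument of Theorem~\ref{thm:main} on $D\times[0,\pi/2]$ with $F=\psi_r$ and $G=\phi_r$. The one change is that the convexity-based boundary condition~(1) is replaced by the two hypotheses of the present statement: $\psi_r\neq0$ on $\partial D\times[0,\pi/2]$, and the boundary degree is odd. The remaining hypotheses of Theorem~\ref{thm:main} come from the construction in Section~\ref{sec:intro}.
\begin{itemize}
\item Condition~(2) holds because $\psi_r(x,\alpha)=\psi_r(x,\alpha+\frac{\pi}{2})$.
\item Conditions~(3) and~(4) hold because of properties~(i) and~(ii) of $\phi$, taking $a=0$ and $b=\pi/2$.
\end{itemize}
As in the proof of Theorem~\ref{thm:fenn}, it is enough to find a common zero of $\phi_r$ and $\psi_r$. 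On $\phi_r^{-1}(0)$ we have $\psi_r=4v$, so $\psi_r=0$ there means the plane through the four lifted vertices is horizontal.

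First I treat the smooth case. Suppose $\psi_r$ is smooth and $0$ is a regular value of $\psi_r$, $\psi_r(\cdot,0)$ and $\psi_r(\cdot,\pi/2)$. Since $D$ is a disk (its boundary is a $C^1$ circle), the odd degree of $\psi_r(\cdot,0)/\|\psi_r(\cdot,0)\|$ on $\partial D$ gives an odd number of zeros $x_1,\dots,x_{2k+1}$ of $\psi_r(\cdot,0)$ in $D$. The degree equals the sum of the local indices, each of which is $\pm1$, so the count is odd; this is exactly the step used in Lemma~\ref{lem:basic}. By periodicity, the same points are the zeros of $\psi_r(\cdot,\pi/2)$. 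If $\phi_r(x_i,0)=0$ for some $i$, we are done. Otherwise $\phi_r$ has opposite signs at $(x_i,0)$ and $(x_i,\pi/2)$, so there are exactly $2k+1$ positive endpoints.

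Because $\psi_r\neq0$ on $\partial D\times[0,\pi/2]$, the set $\psi_r^{-1}(0)$ is a compact $1$-manifold whose arcs have their endpoints on the two faces. The pairing argument from Theorem~\ref{thm:main} then gives an arc whose endpoints carry opposite signs of $\phi_r$. By the intermediate value theorem, $\phi_r$ vanishes somewhere on that arc, and at that point $\psi_r$ also vanishes.

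For continuous $f$ I argue by contradiction. Suppose there is no common zero; by compactness, $\|\psi_r\|+|\phi_r|\ge\delta>0$. Let $m=\min_{\partial D\times[0,\pi/2]}\|\psi_r\|>0$. I approximate $\psi_r$ by a smooth $\overline\psi$ that is $\pi/2$-periodic in $s$, has $0$ as a regular value on the whole domain and on both end faces, and satisfies $\|\overline\psi-\psi_r\|<\min(\delta,m)/4$. To get periodicity I would smooth on $D\times S^1$ with $S^1=\mathbb{R}/\frac{\pi}{2}\mathbb{Z}$ and then apply a small generic translation, which makes $0$ regular for $\overline\psi$ and for its restriction to the face $s=0$. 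The straight-line homotopy from $\overline\psi$ to $\psi_r$ never vanishes on $\partial D$, so the boundary degree is still odd, and $\overline\psi\neq0$ on $\partial D\times[0,\pi/2]$.

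Applying the smooth case to $(\overline\psi,\phi_r)$ gives a point where $\overline\psi=0=\phi_r$. At that point $\|\psi_r\|<\delta/4$, which contradicts the choice of $\delta$. I expect the main technical point to be this approximation: keeping exact periodicity and regularity together, and checking that the degree is preserved. The periodic smoothing followed by a Sard-type perturbation should handle both.
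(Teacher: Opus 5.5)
Your proposal is correct and follows the paper's route. The paper states Theorem~\ref{thm:surface} as a consequence ``in view of Theorem~\ref{thm:main}'', meaning its endpoint-pairing parity argument is rerun with condition~(1) replaced by the nonvanishing and odd-degree hypotheses, exactly as you do with $F=\psi_r$, $G=\phi_r$ on $D\times[0,\pi/2]$; your periodic mollification followed by a Sard-generic translation validly supplies the approximation step the paper leaves as ``standard''.
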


We are interested in table problems on some kinds of surfaces
defined by a special class of continuous functions
$f:\mathbb{R}^{2}\to\mathbb{R}$, the saddle surfaces as defined
below.

\begin{definition}\label{def:saddle}
  Consider a continuous map
  $f:\mathbb{R}^{2}\to\mathbb{R}$.  The graph of $f$ in
  $\mathbb{R}^{3}$ is called a \emph{saddle surface} with center
  $p=(p_{1},p_{2})$, if $f$ satisfies the following conditions:
  \begin{enumerate}
    \item[(i)] $f(x_{1},x_{2})>f(\bar{x}_{1},x_{2})$, provided
      $|x_{1}-p_{1}|>|\bar{x}_{1}-p_{1}|$;
    \item[(ii)] $f(x_{1},x_{2})<f(x_{1},\bar{x}_{2})$, provided
      $|x_{2}-p_{2}|>|\bar{x}_{2}-p_{2}|$.
  \end{enumerate}
\end{definition}

This definition is rather restrictive; in fact, as shown below, it
forces the function to be even in each coordinate with respect to the
center. Without loss of generality, we only consider saddle surfaces with
center at the origin.

Here are some examples of saddle surfaces.
\begin{example}\label{ex:saddle1}
  The surface defined by $x_{3}=f(x_{1},x_{2})=ax_{1}^{2}-bx_{2}^{2}$,
where $a,b>0$.
\end{example}

\begin{example}\label{ex:saddle2}
  The surface defined by $x_{3}=f(x_{1},x_{2})=g(x_{1})-h(x_{2})$,
  where $g$ and $h$ are even functions and are strictly increasing on
$[0,\infty)$.
\end{example}

We are interested in the following questions.

\noindent\textbf{Q1.} Can we balance horizontally an equilateral
triangular table on the surface defined by
$x_{3}=f(x_{1},x_{2})=ax_{1}^{2}-bx_{2}^{2}$?

\noindent\textbf{Q2.} Can we balance horizontally a rectangular
table on the surface defined by
$x_{3}=f(x_{1},x_{2})=g(x_{1})-h(x_{2})$ as given in
Example~\ref{ex:saddle2}?

More generally, can we balance horizontally an equilateral triangle
table or a rectangular table on a saddle surface defined in
Definition~\ref{def:saddle}?

It is easy to see that a square table of any size can be placed
horizontally on the saddle surface in Example~\ref{ex:saddle2}.  
In fact, the same conclusion holds for every saddle surface in 
the sense of Definition~\ref{def:saddle}.  This statement was 
conjectured in~\cite{Yang26}, and we now prove it.

\begin{theorem}[Horizontal squares on saddle surfaces]\label{thm:saddle}
  Let $f:\mathbb{R}^{2}\to\mathbb{R}$ be a continuous map such that
  $G(f)$ is a saddle surface with its center at the origin.  Then
  for any square table $S$, one can place $S$ horizontally on the
  surface $G(f)$, with its center at the origin and with its sides
  parallel to the coordinate axes.
\end{theorem}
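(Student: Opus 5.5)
The proof will be a direct computation that needs none of the topological machinery above. The first step is to show that conditions (i) and (ii) of Definition~\ref{def:saddle} force $f$ to be even in each coordinate about the center, as the remark after the definition asserts. Fix $x_2$ and suppose $f(t,x_2)\neq f(-t,x_2)$ for some $t>0$, say $f(t,x_2)>f(-t,x_2)$. Take $|x_1|=|\bar x_1|=t$ in condition (i). The hypothesis $|x_1|>|\bar x_1|$ does not apply, so condition (i) gives no contradiction from that pair directly. Instead I will compare both points with nearby points. For $0<\varepsilon<t$, condition (i) gives $f(-t,x_2)>f(t-\varepsilon,x_2)$, since $t>t-\varepsilon$. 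Letting $\varepsilon\to0$ and using continuity yields $f(-t,x_2)\ge f(t,x_2)$. By symmetry $f(t,x_2)\ge f(-t,x_2)$, so $f(t,x_2)=f(-t,x_2)$. The same continuity argument applied to condition (ii) gives $f(x_1,x_2)=f(x_1,-x_2)$.

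Now fix the side length $a>0$ and set $h=a/2$. The axis-parallel square centered at the origin has vertices $(\pm h,\pm h)$. By the evenness just proved, $f(h,h)=f(-h,h)=f(-h,-h)=f(h,-h)$, because each vertex is obtained from $(h,h)$ by reflecting one or both coordinates. So $f$ takes a single value $c$ at all four vertices. The corresponding points $(\pm h,\pm h,c)$ of $G(f)$ then lie in the horizontal plane $x_3=c$, which is exactly what a horizontal placement of $S$ means. The placement is centered at the origin with sides parallel to the axes, as required.

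The only step that needs care is the first one. The strict inequalities in Definition~\ref{def:saddle} are stated only for unequal distances from the center, so evenness does not follow from the definition by itself; it needs the continuity limiting argument above. I would write that step out carefully, making sure $t-\varepsilon$ stays nonnegative, so that condition (i) is applied with $|{-t}|=t>|t-\varepsilon|$. Everything after that is immediate. In particular, the monotonicity content of the saddle condition beyond evenness is not used at all, and neither is Theorem~\ref{thm:main}.
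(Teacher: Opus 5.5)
Your proposal is correct and follows essentially the same route as the paper. You use a continuity limit to show $f$ is even in each coordinate, then take the axis-parallel square with vertices $(\pm a/2,\pm a/2)$; the only cosmetic difference is that you compare with the nearer point $t-\varepsilon$ (so $t>0$ is needed, with $t=0$ trivial), whereas the paper compares with the farther point $t+\varepsilon$, which works for every $t\ge 0$.
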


\begin{proof}
  We first show that $f$ is even in each coordinate.  Fix
  $x_2\in\mathbb R$ and $t\ge0$.  For every $\varepsilon>0$, the
  defining condition in the $x_1$-direction gives
  \[
    f(t+\varepsilon,x_2)>f(-t,x_2),
    \qquad
    f(-t-\varepsilon,x_2)>f(t,x_2).
  \]
  Letting $\varepsilon\to0$ and using the continuity of $f$, we obtain
  \[
    f(t,x_2)\ge f(-t,x_2),
    \qquad
    f(-t,x_2)\ge f(t,x_2).
  \]
  Hence
  \[
    f(t,x_2)=f(-t,x_2).
  \]
  Since this holds for every $t\ge0$, $f$ is even in the first
  coordinate.  The same argument, using the defining condition in the
  $x_2$-direction, shows that $f$ is even in the second coordinate.

  Let the side length of $S$ be $r$.  Place its projection on the
  $x_1x_2$-plane as the square with vertices
  \[
    \left(\frac r2,\frac r2\right),\quad
    \left(\frac r2,-\frac r2\right),\quad
    \left(-\frac r2,\frac r2\right),\quad
    \left(-\frac r2,-\frac r2\right).
  \]
  By the evenness of $f$ in both coordinates, the four values of $f$
  at these vertices are equal.  Hence the corresponding four points on
  $G(f)$ lie in a horizontal plane.  This gives the desired horizontal
  placement.
\end{proof}

\section{Balancing Horizontally a Cyclic Table}\label{sec:cyclic}

For the convenience of the arguments in the sequel, we give the
following definition.  By a \emph{cyclic} configuration of four points,
we mean a four-point set $V=[v_1,v_2,v_3,v_4]\subset\mathbb R^2$
which is contained in a round circle.

\begin{definition}
  Given a continuous map $f:\mathbb{R}^{2}\to\mathbb{R}$, the graph
  $G(f)$ is called a \emph{Fenn graph over $D$} if $f$ satisfies the
  conditions in Fenn's theorem for the compact convex domain
  $D\subset\mathbb{R}^{2}$.
\end{definition}

A typical configuration besides the square is the rectangular
configuration.  Up to now, no solid theory for balancing
horizontally a rectangle on a Fenn graph has appeared in the
literature.  A well-known fact is that any rectangle can be
balanced on every continuous graph $G(f)$ (which means that the
table does not wobble) due to a topological theorem by
Livesay~\cite{Livesay54}, which stated that for every
continuous function $F:S^{2}\to\mathbb{R}$ and every angle
$\theta\in(0,\pi)$, there exist two diameters of $S^{2}$ making
angle $\theta$ such that $F$ takes the same value at the four
endpoints of these two diameters.  Since the vertices of a
rectangle inscribed in a circle are exactly the endpoints of two
diameters making a fixed angle, Livesay's theorem implies that
any prescribed rectangle can be positioned on an arbitrary
continuous graph so that its four legs touch the graph.  In other
words, the table can be balanced in the sense that it does not
wobble.  However, this does not mean that the tabletop is
horizontal, since the common plane determined by the four contact
points need not be parallel to the $x_{1}x_{2}$-plane.

The conjecture given in \cite{Yang26} concerns the statement that like the
case of square table, and rectangle table can also be placed horizontally
on the Fenn's graphs, but no prove can be found in the literature. Nonetheless, 
there is evidence suggesting that one can place horizontally a
rectangular table with any aspect ratio but of a restricted size on
a smooth Fenn graph, and this is even possible for any cyclic
configuration $V=[v_{1},v_{2},v_{3},v_{4}]\subset\mathbb{R}^{2}$
with its size (the diameter of $V$) being restricted.

The following result is not a fixed-size rectangular table theorem.
Rather, it shows that on almost every level set one can find a
rectangle of any prescribed aspect ratio, up to similarity.

\begin{theorem}[Rectangles on regular levels of a Fenn graph]\label{thm:rect}
  Let $f:\mathbb{R}^{2}\to\mathbb{R}$ be a smooth map such that
  $G(f)$ is a Fenn graph over a compact convex domain
  $D\subset\mathbb{R}^{2}$.  Let $h=\max_{D}f$.  Then there exists a
  subset $K\subset(0,h)$ of full Lebesgue measure such that for
  every $c\in K$ and every rectangle
  $R=[v_{1},v_{2},v_{3},v_{4}]\subset\mathbb{R}^{2}$, there is an
  orientation-preserving similarity
  $T:\mathbb{R}^{2}\to\mathbb{R}^{2}$ such that
  \[
    f(Tv_{1})=f(Tv_{2})=f(Tv_{3})=f(Tv_{4})=c.
  \]
  Equivalently, the four points
  \[
    (Tv_{i},f(Tv_{i}))\in G(f),\qquad i=1,2,3,4,
  \]
  lie in a horizontal plane and form a rectangle similar to $R$. 
  If $R$ is centered at the origin, then $T$ may be chosen so that $T(0)\in D$.
\end{theorem}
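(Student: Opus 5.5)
The plan is to reduce the statement to an inscription problem for smooth Jordan curves, using Sard's theorem. Let $K$ be the set of regular values of $f$ in $(0,h)$. Since $f$ is smooth, Sard's theorem shows that the critical values of $f$ have Lebesgue measure zero. Hence $K\subset(0,h)$ has full measure.

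Fix $c\in K$. Because $f=0$ on $\mathbb{R}^2\setminus D$ and $c>0$, the level set $f^{-1}(c)$ lies in $D$. It is therefore a compact, regular, one-dimensional submanifold of $\mathbb{R}^2$, that is, a finite disjoint union of smooth embedded circles. It is nonempty by the intermediate value theorem: $f$ attains $h>c$ in $D$ and vanishes outside $D$. Pick one component $\Gamma$. By the Jordan curve theorem it is a smooth Jordan curve, and it lies in $D$.

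The key step is to inscribe a rectangle of the prescribed aspect ratio in $\Gamma$. For this I would invoke the Greene--Lobb theorem (the smooth rectangular peg theorem): every smooth Jordan curve in the plane contains the four vertices of a rectangle similar to any prescribed rectangle. This is the main obstacle. It is a deep result, proved via Lagrangian tori in $\mathbb{C}^2$ and symplectic obstructions, not via the elementary parity argument of Theorem~\ref{thm:main}. My plan is to cite it rather than reprove it; I see no self-contained route with the paper's tools, since it is exactly what fails for fixed-size rectangles. Smoothness of $\Gamma$, which is what Sard's theorem buys us, is what makes the theorem applicable.

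It remains to produce an orientation-preserving similarity with the correct vertex labelling. The inscribed rectangle $R'$ is the image of $R$ under some similarity $T_0$, though possibly with a relabelling of vertices. If $T_0$ reverses orientation, compose it with the reflection of $R$ across one of its symmetry axes. That reflection is a symmetry of $R$ as a set, so the composite $T$ is an orientation-preserving similarity with $T(R)=R'$. Every vertex of $R'$ lies on $\Gamma\subset f^{-1}(c)$, so $f(Tv_i)=c$ for all $i$, whatever order the vertices are hit in. This gives $f(Tv_1)=f(Tv_2)=f(Tv_3)=f(Tv_4)=c$.

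Finally, if $R$ is centered at the origin, then $T(0)$ is the center of $R'$, which is the midpoint of a diagonal of $R'$. The endpoints of that diagonal lie in $\Gamma\subset D$, so by convexity of $D$ we get $T(0)\in D$.
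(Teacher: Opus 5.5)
Your proposal is correct and follows the paper's proof essentially step for step. Both arguments use Sard's theorem to select the regular levels, note that $f^{-1}(c)$ is compact and contained in $D$, pick a smooth Jordan component, apply the Greene--Lobb rectangular peg theorem, and use convexity of $D$ to get $T(0)\in D$. Your explicit treatment of orientation (composing with a reflection symmetry of $R$), of nonemptiness of the level set, and your use of a diagonal midpoint instead of the average of the four vertices only spell out details the paper leaves implicit.
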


\begin{proof}
  Let $K$ be the set of regular values of $f$ in $(0,h)$.  By
  Sard's theorem, $K$ has full Lebesgue measure in $(0,h)$.

  Fix $c\in K$.  Since $c$ is a regular value of $f$, the level set
  $f^{-1}(c)$ is a smooth one-dimensional submanifold of
  $\mathbb{R}^{2}$.  Moreover, since $G(f)$ is a Fenn graph, we
  have $f=0$ outside $D$ and $f\ge0$ on $D$.  Thus, for $0<c<h$,
  the set $f^{-1}(c)$ is contained in the interior of $D$.  In
  particular, $f^{-1}(c)$ is compact.

  Therefore every connected component of $f^{-1}(c)$ is a smooth
  Jordan curve.  Choose one component and denote it by $\gamma$.
  By the rectangular peg theorem of Greene and
  Lobb~\cite{GreeneLobb21}, every smooth Jordan curve in the plane
  inscribes a rectangle similar to any prescribed rectangle.  Hence
  there exists an orientation-preserving similarity
  $T:\mathbb{R}^{2}\to\mathbb{R}^{2}$ such that
  \[
    Tv_{1},Tv_{2},Tv_{3},Tv_{4}\in\gamma\subset f^{-1}(c).
  \]
  It follows that
  \[
    f(Tv_{1})=f(Tv_{2})=f(Tv_{3})=f(Tv_{4})=c.
  \]
  Thus the corresponding four points on $G(f)$ all have height $c$,
  and so they lie in a horizontal plane.  
  
  If $R$ is centered at the origin, then
\[
0=\frac{1}{4}(v_1+v_2+v_3+v_4).
\]
Since $T$ is affine,
\[
T(0)=\frac{1}{4}\sum_{i=1}^{4}T(v_i)\in D,
\]
because $T(v_i)\in\gamma\subset D$ and $D$ is convex.
  This completes the proof.
\end{proof}

Similarly, by applying the theorem of Greene and Lobb on cyclic
quadrilaterals and smooth Jordan curves~\cite{GreeneLobb23}, we obtain
the following result for cyclic quadrilaterals.

\begin{theorem}[Cyclic quadrilaterals on regular levels of a Fenn graph]
\label{thm:cyclic-regular-levels}
Let $f:\mathbb{R}^{2}\to\mathbb{R}$ be a smooth map such that
$G(f)$ is a Fenn graph over a compact convex domain
$D\subset\mathbb{R}^{2}$.  Let $h=\max_{D}f$. Then there exists a subset $K\subset(0,h)$ of full Lebesgue measure
such that, for every $c\in K$ and every cyclic quadrilateral $Q=(v_{1},v_{2},v_{3},v_{4})$
 in the Euclidean plane, there exists an orientation-preserving  similarity $T:\mathbb{R}^{2}\longrightarrow\mathbb{R}^{2}$
such that
\[
  f(Tv_{1})=f(Tv_{2})=f(Tv_{3})=f(Tv_{4})=c.
\]
Equivalently, the four points
\[
  (Tv_i,f(Tv_i))\in G(f),\qquad i=1,2,3,4,
\]
lie in a horizontal plane and form a cyclic quadrilateral similar
to $Q$.

Moreover, if $Q$ is inscribed in a circle centered at the origin and
\[
  0\in\operatorname{conv}\{v_{1},v_{2},v_{3},v_{4}\},
\]
then the similarity $T$ may be chosen so that
\[
  T(0)\in D.
\]
\end{theorem}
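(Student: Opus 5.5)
The plan follows the proof of Theorem~\ref{thm:rect}, with the rectangular peg theorem replaced by the Greene--Lobb theorem on cyclic quadrilaterals~\cite{GreeneLobb23}. That theorem says that every smooth Jordan curve inscribes, up to orientation-preserving similarity, every cyclic quadrilateral, with the vertices appearing on the curve in the cyclic order of the quadrilateral.

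First we fix the set $K$. Take $K$ to be the set of regular values of $f$ lying in $(0,h)$; by Sard's theorem it has full measure. For $c\in K$ the level set $f^{-1}(c)$ is a smooth one-dimensional submanifold. Since $f\equiv0$ off $D$ and $c>0$, it lies in $D$, and in fact in the interior of $D$ by continuity, because $f=0$ on $\partial D$. It is closed in $\mathbb{R}^2$, hence compact, so each component is a smooth Jordan curve. Choose one such component $\gamma$. Applying~\cite{GreeneLobb23} to $\gamma$ and $Q$ gives an orientation-preserving similarity $T$ with $Tv_i\in\gamma$ for all $i$. Hence $f(Tv_i)=c$ for every $i$, and the four lifted points lie in the horizontal plane $x_3=c$. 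The image is similar to $Q$ and cyclic, since $T$ maps circles to circles.

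For the ``moreover'' part, suppose $0\in\operatorname{conv}\{v_1,\dots,v_4\}$. Write $0=\sum\lambda_i v_i$ with $\lambda_i\ge0$ and $\sum\lambda_i=1$. Because $T$ is affine, $T(0)=\sum\lambda_i Tv_i$. This is a convex combination of points of $\gamma\subset D$, so $T(0)\in D$ by convexity of $D$. The hypothesis that the circumcenter is the origin is not actually needed for this step. It only serves to make the condition $0\in\operatorname{conv}$ meaningful, since it says the circumcenter lies in the closed quadrilateral.

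The main obstacle is matching the hypotheses and conclusion of the cited Greene--Lobb result exactly. One must check that it applies to smooth Jordan curves and to every cyclic quadrilateral, not merely to generic ones or to a particular orientation, and that the similarity it produces is orientation-preserving. I would check the statement in~\cite{GreeneLobb23} for these points. If it only covers generic curves or generic quadrilaterals, I would first try a limiting argument. Approximate $\gamma$ and $Q$ by curves and quadrilaterals covered by the theorem, and extract a convergent subsequence of the similarities. Compactness holds because all inscribed vertices lie in the compact set $D$. One must also rule out degeneration of the scale to zero. This is exactly the delicate point in such limit arguments, and a smooth curve with its tubular neighborhood should provide the needed lower bound.
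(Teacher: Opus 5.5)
Your proposal is correct and is essentially the paper's argument. The paper gives no separate proof; it says the result follows ``similarly'' to Theorem~\ref{thm:rect}: Sard's theorem, a component $\gamma$ of a regular level set as a smooth Jordan curve in the interior of $D$, \cite{GreeneLobb23} in place of the rectangular peg theorem, and the convex-combination argument for $T(0)\in D$ (which, as you correctly note, works for any point of $\operatorname{conv}\{v_1,\dots,v_4\}$, so the circumcenter hypothesis is superfluous). Your fallback limiting argument is unnecessary, since the cited result already covers every smooth Jordan curve and every cyclic quadrilateral with an orientation-preserving similarity; and your claim that the vertices appear on $\gamma$ in the cyclic order of $Q$ is never used and should not be attributed to \cite{GreeneLobb23} without checking.
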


In view of the above theorems, it is natural to state the
following conjecture.

\begin{conjecture}\label{conj:cyclic}
  Let $G(f)$ be a Fenn graph over a compact convex domain
  $D\subset\mathbb{R}^{2}$.  Then for every rectangle, and 
  more generally for every cyclic quadrilateral
  $Q=(v_{1},v_{2},v_{3},v_{4})$ inscribed in the circle
  $S^{1}(r)=\{x\in\mathbb{R}^{2}: \|x\|=r\}$, there exists a
  rigid motion $T:\mathbb{R}^{2}\to\mathbb{R}^{2}$ such that
  \[
    f(Tv_{1}) = f(Tv_{2}) = f(Tv_{3}) = f(Tv_{4})
    \quad\text{and}\quad T(0)\in D.
  \]
\end{conjecture}

\begin{ack}
This work was supported by the National Natural Science Foundation of China
(Grant No. 12531017) and the Hubei Provincial Natural Science Foundation of
China (Grant No. 2026AFA034).
\end{ack}

\bibliographystyle{amsplain}
\bibliography{newtable}

@article{Fenn70,
  author  = {Roger Fenn},
  title   = {The Table Theorem},
  journal = {Bull. London Math. Soc.},
  volume  = {2},
  year    = {1970},
  pages   = {73--76},
  mrclass = {57.00},
  mrnumber= {0271940}
}

@article{Livesay54,
  author  = {G. R. Livesay},
  title   = {On a Theorem of {F.\ J.\ Dyson}},
  journal = {Ann. of Math. (2)},
  volume  = {59},
  year    = {1954},
  pages   = {227--229}
}

@article{GreeneLobb21,
  author  = {Joshua Evan Greene and Andrew Lobb},
  title   = {The Rectangular Peg Problem},
  journal = {Ann. of Math. (2)},
  volume  = {194},
  number  = {2},
  year    = {2021},
  pages   = {509--517}
}

@article{GreeneLobb23,
  author  = {Joshua Evan Greene and Andrew {Lobb}},
  title   = {Cyclic Quadrilaterals and Smooth {J}ordan Curves},
  journal = {Invent. Math.},
  volume  = {234},
  year    = {2023},
  pages   = {931--935}
}

@article{Yang26,
  author  = {Xiao-Song Yang},
  title   = {Table Problem Revisited},
  journal = {arXiv:2606.23089 [math.GT]},
  year    = {2026},
  note    = {\url{https://arxiv.org/abs/2606.23089}}
}

\end{document}